\documentclass{amsart}

\usepackage{amssymb}
\usepackage{amsthm}
\usepackage{amsmath} 
\usepackage{amsbsy}
\usepackage{hyperref}
\usepackage{tikz}
\setlength\parindent{0pt}

\newtheorem{theorem}{Theorem}

\newtheorem*{proposition}{Proposition}

\newtheorem*{lem}{Lemma}

\begin{document}

\title[]{Fourier Uncertainty Principles, Scale Space Theory and the Smoothest Average} 
\keywords{Uncertainty Principle, Fourier Transform, Scale Space, Averaging Operator, Poisson Summation Formula, Shannon-Whittaker  Formula, Hypergeometric Function.}
\subjclass[2010]{33C20, 42A38, 65D10} 

\thanks{S.S. is supported by the NSF (DMS-1763179) and the Alfred P. Sloan Foundation.}

\author[]{Stefan Steinerberger}
\address{Department of Mathematics, Yale University, New Haven, CT 06510, USA}
\email{stefan.steinerberger@yale.edu}

\begin{abstract} Let $f \in L^{2}(\mathbb{R}^n)$ and suppose we are interested in computing its average at a fixed scale. This is easy: we pick the density $u_{}$ of a probability distribution with mean 0 and some moment at the desired scale and compute the convolution $u_{} * f$. Is there a particularly natural choice for $u$? This question is studied in scale space theory and the Gaussian is a popular answer. We were interested whether a canonical choice for $u$ can arise from a new axiom: having fixed a scale, the average should oscillate as little as possible, i.e.
$$ u_{} = \arg\min_{u_{}} \sup_{f \in L^2(\mathbb{R}^n)} \frac{\| \nabla (u_{} *f) \|_{L^2(\mathbb{R}^n)}}{\|f\|_{L^2(\mathbb{R}^n)}}.$$
This optimal function turns out to be a minimizer of an uncertainty principle:
for $\alpha > 0$ and $\beta > n/2$, there exists $c_{\alpha, \beta,n} > 0$ such that for all $u \in L^1(\mathbb{R}^n)$
$$  \| |\xi|^{\beta} \cdot \widehat{u}\|^{\alpha}_{L^{\infty}(\mathbb{R}^n)} \cdot \| |x|^{\alpha} \cdot u \|^{\beta}_{L^1(\mathbb{R}^n)} \geq c_{\alpha, \beta,n} \|u\|_{L^1(\mathbb{R}^n)}^{\alpha + \beta}.$$
For $\beta = 1$, any nonnegative extremizer of the inequality serves as the best averaging function in the sense above, $\beta \neq 1$ corresponds to other derivatives. For $(n, \beta)=(1,1)$ we use the Shannon-Whittaker formula to prove that the characteristic function $u(x) = \chi_{[-1/2,1/2]}$ is a local minimizer among functions defined on $[-1/2,1/2]$ for $\alpha \in \left\{2,3,4,5,6\right\}$. We provide a sufficient condition for general $\alpha$ in terms of a sign pattern for the hypergeometric function $_1F_2$.
\end{abstract}
\maketitle

\section{Introduction and Motivation}

 What is the best way to partition a cake into two pieces for two different people? This question, even when properly quantified, will not have a clear universal answer. However, it is conceivable to pose a number of axioms that one wishes a cake-division rule to satisfy and study
the set of all cake-subdivision rules satisfying these axioms. This axiomatic method has been very effectively used in cooperative game theory and economics (see e.g. \cite{har, kapeller, nash, shapley}). A nice by-product of the axiomatic approach is that it moves the discussion from `what should we do?' to `what are desirable properties?' which often leads to more insight. In the same manner, we ask a question that was the original motivation of this paper.
\begin{quote}
\textbf{Question.} Let $f:\mathbb{R}^n \rightarrow \mathbb{R}$. What is the `best' way to average $f$ over a given scale? What are natural desirable properties that one could require of such an averaging procedure and which averaging procedures are characterized by these properties?
\end{quote}

In the spirit of the axiomatic method, we will pose a number of desirable properties and then investigate what these properties imply. The various symmetries of $\mathbb{R}^n$ should be reflected in the averaging method: in particular, we will focus on the special case
where
$$ \mbox{average of}~f~\mbox{in a point}~x = \int_{\mathbb{R}^n}{ f(x+y) u(y) dy},$$
where $u:\mathbb{R}^n \rightarrow \mathbb{R}_{}$ is a nonnegative, radial function with $L^1-$norm $\|u\|_{L^1(\mathbb{R}^n)}=1$. Moreover, we will assume that the averaging is supposed to happen at a fixed scale, we will do so by imposing a condition that a certain moment is fixed, i.e.
$$ \int_{\mathbb{R}^n}{ |x|^{\alpha} u(x) dx} = \mbox{fixed}.$$
However, even with all these restrictions, there are still a large number of functions $u$ that could conceivably be used.  This question has been actively studied in \textit{scale-space theory} (see \cite{babaud, linde, linde2, yu}), a theoretical branch of image processing concerned with the same question: how should one properly smooth an image? In this field, the Gaussian is the canonical choice:
\begin{quote}
``A notable coincidence between the different 
scale-space formulations that have been stated is that the Gaussian 
kernel arises as a unique choice for a large number of different 
combinations of underlying assumptions (scale-space axioms).'' (Lindeberg \cite{linde}, 1997)
\end{quote}

We were motivated by trying to understand the implications of a new \textbf{axiom}: `a convolution at a certain scale should be as smooth as possible'. Obviously, this can be interpreted in many ways -- a very natural way is to look for the function $u$ satisfying all the constants above for which the constant $c_u$ in the inequality
$$ \forall ~f \in L^2(\mathbb{R}^n)  \qquad  \| \nabla (u*f)\|_{L^2(\mathbb{R}^n)} \leq c_u \|f\|_{L^2(\mathbb{R}^n)}$$
is as small as possible. Using the Fourier-Transform, we see that, up to a universal constant $c_n$,
$$  \| \nabla (u*f)\|^2_{L^2(\mathbb{R}^n)} = c_{n} \int_{\mathbb{R}^n} |\xi|^2 |\widehat{u}(\xi)|^2 |\widehat{f}(\xi)|^2 d\xi \leq c_n\| \xi \cdot \widehat{u}(\xi) \|_{L^{\infty}(\mathbb{R}^n)}^2 \|f\|_{L^2(\mathbb{R}^n)}^2.$$
It is not too difficult to see that this constant is sharp (since $\widehat{u}(\xi)$ is continuous, we can construct a function $f$ concentrating its $L^2-$mass close to a point where $\xi \cdot \widehat{u}(\xi)$ assumes its extremum). So the question is simply: which function minimizes $ \| \xi \cdot \widehat{u}(\xi) \|_{L^{\infty}}$ among all radial functions with normalized $L^1-$mass and a normalized moment (controlling the scale)? 
This is the question that we address in this paper. However, we emphasize are many other interesting questions in the vicinity. There are  other ways of studying oscillation of a function than $\|\nabla^s f\|$ and other function spaces than $L^2$ in which one could measure the size of a function and its derivative. Finally, we note one particularly interesting problem that arises in $n=1$ dimensions when one demands $u$ to be supported in $[-\infty, 0]$. This question is of particular importance for time-series: how would one compute the average score of a function when one cannot look into the future?  This case is much less understood: there are arguments in favor of the exponential distribution \cite{schonberg, steini}, Gaussian constructions \cite{linde2} and intermediate constructions \cite{vickrey}. It would be very interesting to have a better understanding of this case, also from the perspective taken in this paper.

\section{The Results}
\subsection{An Uncertainty Principle.} We state the most general form of the statement; the case most of interest to us throughout the rest of the paper is $(n,\beta) = (1,1)$. The case $\beta \neq 1$ corresponds to either higher derivatives (if $\beta \in \mathbb{N}$) or fractional derivatives (if $\beta \notin \mathbb{N}$). We know very little about these cases. 
\begin{theorem}[Uncertainty Principle] For any $\alpha > 0$ and $\beta > n/2$, there exists $c_{\alpha, \beta,n} > 0$ such that for all $u \in L^1(\mathbb{R}^n)$
$$  \| |\xi|^{\beta} \cdot \widehat{u}\|^{\alpha}_{L^{\infty}(\mathbb{R}^n)} \cdot \| |x|^{\alpha} \cdot u \|^{\beta}_{L^1(\mathbb{R}^n)} \geq c_{\alpha, \beta,n} \|u\|_{L^1(\mathbb{R}^n)}^{\alpha + \beta}.$$
\end{theorem}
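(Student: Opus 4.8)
The plan is to prove the inequality by a two-scale interpolation argument: reduce first to a normalized setting, establish an auxiliary $L^2$ bound that is where the hypothesis $\beta > n/2$ enters, and then balance the contributions of $u$ near and far from the origin.

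First I would record that the inequality is homogeneous of degree $\alpha+\beta$ under $u \mapsto cu$ and invariant under the dilation $u \mapsto u(\lambda\, \cdot)$. The dilation invariance is a useful consistency check that the exponents are forced, and the amplitude homogeneity lets me assume $\|u\|_{L^1(\mathbb{R}^n)} = 1$ without loss of generality. Writing $A = \||\xi|^{\beta}\widehat u\|_{L^{\infty}}$ and $B = \||x|^{\alpha} u\|_{L^1}$, the goal becomes $A^{\alpha}B^{\beta} \geq c_{\alpha,\beta,n}$, and I may assume $A,B < \infty$ since otherwise there is nothing to prove.

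The key step is the auxiliary bound $\|u\|_{L^2} \lesssim A^{n/(2\beta)}$. Since $u \in L^1$ we have the pointwise estimate $|\widehat u(\xi)| \leq \|u\|_{L^1} = 1$, while by definition of $A$ also $|\widehat u(\xi)| \leq A|\xi|^{-\beta}$. Splitting the Plancherel integral at the threshold $|\xi| = A^{1/\beta}$ where these two bounds cross gives $\|u\|_{L^2}^2 = c_n\|\widehat u\|_{L^2}^2 \lesssim \int_{|\xi| \leq A^{1/\beta}} 1\, d\xi + A^2 \int_{|\xi| > A^{1/\beta}} |\xi|^{-2\beta}\, d\xi$, and it is precisely here that the hypothesis $\beta > n/2$ is needed: it makes the tail integral $\int_{|\xi|>\rho}|\xi|^{-2\beta}\,d\xi \asymp \rho^{n-2\beta}$ converge, and in particular guarantees $u \in L^2$ so that Plancherel applies. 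Evaluating both pieces yields $\|u\|_{L^2}^2 \lesssim A^{n/\beta}$.

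Next I split the mass of $u$ in physical space at a radius $R>0$ to be chosen. The far part is controlled by a Chebyshev/Markov estimate, $\int_{|x|>R}|u| \leq R^{-\alpha}B$, and the near part by Cauchy–Schwarz against the ball volume, $\int_{|x| \leq R}|u| \leq (\omega_n R^n)^{1/2}\|u\|_{L^2} \lesssim R^{n/2} A^{n/(2\beta)}$. Adding these and using $\|u\|_{L^1}=1$ gives $1 \lesssim R^{n/2} A^{n/(2\beta)} + R^{-\alpha} B$ for every $R$; the two terms are increasing and decreasing in $R$, so balancing them and then raising to a suitable positive power yields $A^{n\alpha/(2\beta)} B^{n/2} \gtrsim 1$, and a final exponentiation by $2\beta/n$ gives exactly $A^{\alpha}B^{\beta} \gtrsim 1$. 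Undoing the normalization restores the stated inequality with an explicit constant $c_{\alpha,\beta,n}$. I expect the only real obstacle to be the $L^2$ bound and its dependence on $\beta > n/2$, since everything else is a routine optimization; a secondary caveat is that this argument is lossy and will not identify the sharp constant or the extremizers, which require the separate variational analysis taken up later in the paper.
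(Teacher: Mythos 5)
Your proof is correct, and it uses the same basic ingredients as the paper -- the homogeneity reduction, the two pointwise bounds $|\widehat u(\xi)| \leq \min\{\|u\|_{L^1}, A|\xi|^{-\beta}\}$, Plancherel with $\beta > n/2$ controlling the frequency tail, and the Markov/Cauchy--Schwarz splitting in physical space -- but it arranges them in a genuinely different and somewhat cleaner way. The paper uses \emph{both} symmetries (scalar and dilation) to normalize $\|u\|_{L^1} = \||x|^{\alpha}u\|_{L^1} = 1$, and then runs a compactness-flavored argument: it first deduces $\|u\|_{L^2} \gtrsim_{\alpha,n} 1$ from the concentration of $L^1$-mass near the origin, then assumes without loss of generality $A \leq 1$, shows that the $L^2$-mass of $\widehat u$ cannot escape beyond a large radius $c_1$ (tail convergence), cannot all sit near $\xi = 0$ (since $|\widehat u| \leq 1$), and therefore a fixed amount of $\int |\widehat u|$ lives in an annulus $c_2 \leq |\xi| \leq c_1$, which forces $A \gtrsim c_2^{\beta}$. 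You instead keep $B = \||x|^{\alpha}u\|_{L^1}$ as a free parameter, prove the quantitative bound $\|u\|_{L^2}^2 \lesssim A^{n/\beta}$ by splitting the Plancherel integral at the crossover point $|\xi| = A^{1/\beta}$ (a step the paper never takes -- it splits at a fixed radius under the assumption $A \leq 1$), and recover the scale invariance by optimizing over the physical-space radius $R$; your optimization over $R$ is exactly the substitute for the paper's dilation normalization. What each buys: your route is shorter, yields an explicit constant $c_{\alpha,\beta,n}$ directly, and avoids both the \emph{WLOG} step and the annulus argument; the paper's route, having frozen the dilation, works with fixed radii and no optimization, and incidentally its final display drops a volume factor $|\{c_2 \leq |\xi| \leq c_1\}|$ that your bookkeeping would not let you omit. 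One detail you leave implicit but which is easily repaired: the balancing requires $A, B > 0$, and both degenerate cases contradict $\|u\|_{L^1} = 1$ by letting $R \to \infty$ or $R \to 0$ in your two-term bound.
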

This inequality shows that fixing the $L^1-$mass to be $\|u\|_{L^1(\mathbb{R}^n)} = 1$ and fixing any moment leads to a universal lower bound on
how small $\||\xi|^{\beta} \cdot \widehat{u}(\xi)\|_{L^{\infty}(\mathbb{R}^n)}$ can be. This shows that our axiom for the averaging operation is meaningful: for any
averaging function $u$ (having fixed scale and $L^1-$norm) there is indeed a frequency $\xi$ such that $u * \exp(i \xi x)$ is not all that small.
Somewhat to our surprise, we were not able to locate this uncertainty principle among the large number of results that have been obtained in this area (see e.g. \cite{amrein, babenko, beckner, beckner2, bene, bene2, bened, bourgain, cohn, cow, dreier, dreier2, ehm, feig, folland, gneiting, gobber, gobber2, gon, gon2, gon3, gorb, gorb2, hardy, heinig, hirsch, hogan, johan, martini, mor}). Indeed, it seems that most uncertainty principles have the lower bound in $L^2$. Two somewhat related inequalities are given by a special case of the Cowling-Price uncertainty principle \cite{cow} stating that for any $\alpha > 0$ and $\beta > 1/2$
$$  \| |\xi|^{\beta} \cdot \widehat{u}\|^{\alpha + \frac12}_{L^{\infty}(\mathbb{R})} \cdot \| |x|^{\alpha} \cdot u \|^{\beta - \frac12}_{L^1(\mathbb{R})} \geq c_{\alpha, \beta} \|u\|_{L^2(\mathbb{R})}^{\alpha + \beta}.$$
and an inequality of Laeng \& Morpurgo \cite{laeng} 
$$  \| \xi \cdot \widehat{u}\|^{2}_{L^{2}(\mathbb{R})} \cdot \| |x|^{2} \cdot u \|_{L^1(\mathbb{R})} \geq c_{} \|u\|_{L^1(\mathbb{R})} \|u\|_{L^2(\mathbb{R})}^2$$
which has some resemblance to our inequality for $(n,\alpha, \beta)= (1,2,1)$
$$  \| \xi \cdot \widehat{u}\|^{2}_{L^{\infty}(\mathbb{R})} \cdot \| |x|^{2} \cdot u \|_{L^1(\mathbb{R})} \geq c_{} \|u\|_{L^1(\mathbb{R})}^3.~~~~~~~~\quad~~~\qquad$$

\subsection{The Characteristic Function.} From now on, we will restrict ourselves to trying to understand the extremizer in the case $(n,\beta) = 1$. Other cases may be just as interesting.
Considering the initial motivation of finding the `best' kernel for the purpose of smoothing functions, an interesting choice is given by the characteristic function of an interval that is symmetric around the origin -- using the dilation symmetry, we can restrict ourselves to the case
$$ u(x) = \chi_{[-1/2, 1/2]}.$$
This function does indeed lead to a very small constant in the uncertainty principle: in particular, as soon as $\alpha \geq 1.38$, the characteristic function leads to a smaller constant than the Gaussian. We prove that it is a local minimizer among even functions $u:[-1/2,1/2] \rightarrow \mathbb{R}$ for some parameters.
\begin{theorem}[Characteristic Function as Local Minimizer] Let $(n,\beta) =(1,1)$ and $\alpha \in \left\{2,3,4,5,6\right\}$. The characteristic function $u(x) = \chi_{[-1/2,1/2]}(x)$ is a local minimizer in the class of even, smooth functions $f:[-1/2, 1/2] \rightarrow \mathbb{R}$.
\end{theorem}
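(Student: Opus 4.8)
The plan is to perturb within the class, writing $u = (1+\epsilon v)\,\chi_{[-1/2,1/2]}$ for an even, smooth $v$ on $[-1/2,1/2]$, and to show that the functional
\[
J(u) = \frac{\|\xi\,\widehat u\|_{L^\infty}^{\alpha}\,\||x|^\alpha u\|_{L^1}}{\|u\|_{L^1}^{\alpha+1}}
\]
does not decrease to first order in $\epsilon$ in any admissible direction. Since $J$ is invariant under $u\mapsto\lambda u$, I may normalize by $\int_{-1/2}^{1/2} v = 0$, so that $\|u\|_{L^1}$ is constant; both the mass and the moment $\||x|^\alpha u\|_{L^1}$ are then exactly linear in $\epsilon$, and the only delicate term is $A(\epsilon)=\|\xi\,\widehat u\|_{L^\infty}$.

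The key structural input is that $u$ is supported in $[-1/2,1/2]$, so $g(\xi)=\xi\,\widehat u(\xi)$ is band-limited; for $u=\chi_{[-1/2,1/2]}$ one has $g(\xi)=\sin(\pi\xi)/\pi$, whose modulus attains its maximum $1/\pi$ at \emph{every} half-integer $\xi_k=k+\tfrac12$. This Nyquist/Shannon--Whittaker phenomenon drives the argument: the $L^\infty$-norm is controlled by infinitely many equal peaks, and no perturbation can push all of them down at once because the relevant quantities tend to $0$. Expanding near each $\xi_k$, which is a critical point of the dominant $\sin(\pi\xi)$ (so the peak location moves only at order $\epsilon^2$), I would obtain
\[
A(\epsilon)=\frac1\pi\Big[1+\epsilon\big(v(\tfrac12)+\sup_{k}(-1)^{k+1}I_k\big)\Big]+O(\epsilon^2),\qquad I_k=\int_0^{1/2} v'(x)\,\sin\big((2k+1)\pi x\big)\,dx .
\]
Assembling the three pieces, local minimality reduces to the first-order cone inequality $\Phi(v)\ge0$ for all admissible $v$, where, after using $\int v=0$ to express $v(\tfrac12)$ and the moment through $v'$,
\[
\Phi(v)=\alpha\,\sup_{k}(-1)^{k+1}I_k+\int_0^{1/2} v'(t)\,W(t)\,dt,\qquad W(t)=2(\alpha+1)t-2^{\alpha+1}t^{\alpha+1},\quad W(\tfrac12)=\alpha .
\]

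I would then expand $v'$ in the orthogonal basis $\{\sin((2k+1)\pi t)\}_{k\ge0}$ of $L^2[0,1/2]$, so that $\int_0^{1/2} v'W=\sum_k\lambda_k I_k$ with $\lambda_k$ proportional to $\widetilde W_k=\int_0^{1/2} W(t)\sin((2k+1)\pi t)\,dt$. Convex duality (the subdifferential at $0$ of $v\mapsto\sup_k(-1)^{k+1}I_k$ is the closed convex hull of $\{(-1)^{k+1}e_k\}$) shows that $\Phi(v)\ge0$ for all admissible $v$ \emph{if and only if} $\operatorname{sign}(\lambda_k)=(-1)^k$ together with $\sum_k|\lambda_k|\le\alpha$. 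Here two facts must be verified. First, because $W(0)=0$ and a further boundary cancellation occurs, the slowly decaying $O(1/k)$ and $O(1/k^2)$ parts of $\widetilde W_k$ vanish, so $\lambda_k$ is summable. Second, since $\sum_k(-1)^k\widetilde W_k$ reproduces $W(\tfrac12)=\alpha$ through the sampling identity at the Neumann endpoint, the magnitude bound holds with \emph{equality}, $\sum_k|\lambda_k|=\alpha$; this is not an accident but the dilation invariance of $J$, and the unique extremal direction $I_k=(-1)^kM$ is boundary-concentrated and hence not realized by any smooth $v$, so $\Phi(v)>0$ strictly on the smooth class.

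Everything therefore collapses to the sign condition $\operatorname{sign}(\widetilde W_k)=(-1)^k$. Since $\int_0^{1/2} t^{\alpha+1}\sin((2k+1)\pi t)\,dt$ is a value of the hypergeometric ${}_1F_2$, this is precisely a sign pattern for ${}_1F_2$, which is the promised sufficient condition in general and which I would check directly for $\alpha\in\{2,3,4,5,6\}$; for $\alpha=2$, for instance, $\widetilde W_k=(-1)^k\,48/((2k+1)^4\pi^4)$ and indeed $\sum_k|\lambda_k|=2=\alpha$ on the nose. The main obstacle is exactly this sign pattern: the two leading asymptotic terms of $\widetilde W_k$ cancel, so the sign is decided by finer, non-oscillatory structure of ${}_1F_2$, provable case by case for small integer $\alpha$ but not obviously monotone in $k$ or in $\alpha$. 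A secondary point requiring care is the marginal (dilation) direction in which $\Phi$ degenerates to zero in the limit: one must ensure the $O(\epsilon^2)$ remainder does not overwhelm the first-order gain uniformly, which I would control either by quotienting out the scaling symmetry or by a direct second-order estimate confined to that single direction.
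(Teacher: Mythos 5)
Your proposal is correct, and at its core it is the paper's own argument viewed from the physical side rather than the Fourier side; still, the repackaging has real content, so the comparison is worth making. The paper does the same first-order stability analysis and reduces matters to $\max(\widehat f)\ge\frac{\alpha+1}{\alpha\pi}\int_{-1/2}^{1/2}(1-|2x|^{\alpha})f\,dx$; your $\Phi(v)\ge 0$ is exactly this inequality for mean-zero perturbations (note $W(t)=2(\alpha+1)\int_0^t(1-(2s)^{\alpha})\,ds$, so the two are related by one integration by parts). Where the paper expands $\widehat f$ and $\widehat h$, $h=\max\{1-|2x|^{\alpha},0\}$, by Shannon--Whittaker at the half-integer samples and uses orthogonality of the shifted sincs, you expand $v'$ in $\{\sin((2k+1)\pi t)\}$ on $[0,1/2]$ -- the same decomposition, since sampling a function band-limited to $[-1/2,1/2]$ at $\mathbb{Z}+\tfrac12$ \emph{is} Fourier series on the interval. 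Your sign condition is also the paper's: integrating by parts gives $\widetilde W_k=\frac{\alpha+1}{(2k+1)\pi}a_{k+1}$ in the paper's notation, so both proofs hinge on the identical ${}_1F_2$ sign pattern and both require the same finite case-by-case check for $\alpha\in\{2,3,4,5,6\}$ (you carried out only $\alpha=2$, correctly; your scheme does not eliminate the remaining verifications). What your version genuinely buys: the sharp constant appears as a boundary value, $\sum_k 4(-1)^k\widetilde W_k=W(1/2)=\alpha$, legitimized by the cancellations $W(0)=0$, $W'(1/2)=0$ which make the coefficients absolutely summable -- this cleanly replaces the paper's evaluation of $4\sum_k(-1)^{k+1}a_k/(2k-1)$ via the $\arctan(e^{-i\pi x})$ series; the convex-duality framing isolates exactly the two facts to prove (sign pattern plus the $\ell^1$ bound by $\alpha$); and you settle strictness, observing that equality would force $I_k=(-1)^{k+1}s$ with $s>0$ for all $k$, impossible for $v'\in L^1$ by Riemann--Lebesgue, with the flat direction correctly identified as the boundary-concentrated dilation field -- a degeneracy the paper (whose inequality is attained by constants) never discusses. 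The only place you are no more rigorous than the paper is the first-order expansion of the $L^{\infty}$ norm itself, where the supremum over infinitely many peaks must be controlled uniformly; both proofs gloss this, and it can be justified since $\int_0^{1/2}v'(x)\sin(2\pi\xi x)\,dx$ has uniformly bounded $\xi$-derivatives and $I_k\to 0$.
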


The proof is based on the lucky confluence of several factors:
\begin{enumerate}
\item if $ u(x) = \chi_{[-1/2, 1/2]},$ then $\xi \cdot \widehat{u}(\xi)$ assumes its extrema on $\mathbb{Z} + 1/2$. 
\item $\widehat{u}(\xi)$ is band-limited: its Fourier transform is supported on $[-1/2, 1/2]$ 
\item the Shannon-Whittaker reconstruction formula allows us to reconstruct such a band-limited function from equally spaced  function values as long as we can sample with density at least 1
\item and all the arising computations can be carried out.
\end{enumerate}

The proof of Theorem 2 requires a Lemma that may be interesting in its own right.
Let $f: [-1/2, 1/2] \rightarrow \mathbb{R}$ be an even, smooth function. We introduce the quantity
$$ \max(\widehat{f})  =  \max \left\{ \sup_{k \in \mathbb{N}}  \left(2k+\frac{1}{2}\right)\widehat{f}\left(2k+\frac{1}{2}\right), -\inf_{k \in \mathbb{N}}  \left(2k + \frac{3}{2}\right)\widehat{f}\left(2k +\frac{3}{2}\right)   \right\}.
$$ 
This quantity arises naturally in the stability analysis. As it turns out, we have the following sharp inequality (equality is attained for constant functions).

\begin{lem} Let $\alpha \in \left\{2,3,4,5,6\right\}$ and let $f:[-1/2, 1/2] \rightarrow \mathbb{R}$ be smooth and even. We have
$$ \max(\widehat{f}) \geq \frac{\alpha+1}{\alpha \pi}\int_{-1/2}^{1/2} \left(1-|2x|^{\alpha}\right) f(x) dx.$$
\end{lem}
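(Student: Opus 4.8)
The plan is to read off from $\max(\widehat{f})$ a family of one-sided bounds on the sampled values $\xi\widehat{f}(\xi)$ and to rewrite the right-hand side as a sum over exactly those samples, so that the inequality follows term by term. Write $g(x) = 1 - |2x|^{\alpha}$, and note first that $g$ is even and vanishes at $x = \pm 1/2$, while $f$ is supported in $[-1/2,1/2]$ so that $\widehat{f}$ is band-limited. The even Dirichlet eigenfunctions on $[-1/2,1/2]$ are $\{\cos(\pi m x): m \text{ odd}\}$ — an orthogonal basis of the even $L^2$ functions, the classical square-wave system — and their frequencies $m/2$ are precisely the half-integer sampling grid. Expanding $g$ in this basis and invoking Parseval, equivalently applying the Shannon--Whittaker formula to $\widehat{f}$ on the grid $\mathbb{Z}+1/2$, I would obtain the identity
\[ \int_{-1/2}^{1/2} g(x) f(x)\,dx = \sum_{m\geq 1\text{ odd}} b_m\, \widehat{f}(m/2), \qquad b_m = 2\int_{-1/2}^{1/2} g(x)\cos(\pi m x)\,dx. \]

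Next I would recast this in terms of $T_m := \tfrac{m}{2}\widehat{f}(m/2)$, the value of $\xi\widehat{f}(\xi)$ at the sample point $\xi = m/2$, giving $\int g f = \sum_m \tfrac{2 b_m}{m} T_m$. The grid splits according to $m \equiv 1$ and $m \equiv 3 \ (\mathrm{mod}\ 4)$, which are exactly the two families $2k+1/2$ and $2k+3/2$ appearing in $\max(\widehat{f})$; writing $M = \max(\widehat{f})$, its definition gives $T_m \le M$ for $m\equiv 1$ and $T_m \ge -M$ for $m\equiv 3 \ (\mathrm{mod}\ 4)$. Provided $b_m \ge 0$ for $m\equiv 1$ and $b_m \le 0$ for $m\equiv 3 \ (\mathrm{mod}\ 4)$, each summand obeys $\tfrac{2 b_m}{m} T_m \le \tfrac{2|b_m|}{m} M$, and summation yields $\int g f \le M \sum_m \tfrac{2|b_m|}{m}$.

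To pin down the constant I would test the identity on $f\equiv 1$. Here $\widehat{1}(m/2) = \tfrac{2}{\pi m}\sin(\pi m/2)$, so $T_m = \pm 1/\pi$ with sign matching the two residue classes; that is, $f\equiv 1$ saturates every one of the inequalities above simultaneously, so $M=1/\pi$. Since $\int_{-1/2}^{1/2}(1-|2x|^{\alpha})\,dx = \alpha/(\alpha+1)$, comparing the two expressions for $\int g\cdot 1$ forces $\sum_m \tfrac{2|b_m|}{m} = \tfrac{\alpha\pi}{\alpha+1}$ once the sign pattern is known, whence $\int g f \le \tfrac{\alpha\pi}{\alpha+1} M$, which is the claimed inequality; the same computation exhibits constants as the equality case.

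The heart of the matter — and the only place where $\alpha \in \{2,3,4,5,6\}$ enters — is verifying the sign pattern $\operatorname{sign}(b_m) = +1$ for $m\equiv 1$ and $-1$ for $m\equiv 3 \ (\mathrm{mod}\ 4)$. Writing $b_m = \tfrac{4}{\pi m}\sin(\pi m/2) - 2\int_0^1 t^{\alpha}\cos(\tfrac{\pi m}{2}t)\,dt$, this reduces to a sign comparison for the oscillatory integral $\int_0^1 t^{\alpha}\cos(\tfrac{\pi m}{2}t)\,dt$, which for non-integer $\alpha$ is a value of ${}_1F_2$. For integer $\alpha\in\{2,\dots,6\}$ repeated integration by parts terminates and leaves a finite sum whose sign can be checked explicitly; for instance $\alpha = 2$ gives $b_m = 32\sin(\pi m/2)/(\pi m)^3$, manifestly of sign $\sin(\pi m/2)=\pm 1$. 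I expect this sign verification to be the main obstacle: for small integer $\alpha$ the terminating sums are manageable, but for general $\alpha$ controlling the sign of the ${}_1F_2$ values is exactly the delicate hypergeometric sign condition alluded to above, and is the reason the clean statement is restricted to these $\alpha$.
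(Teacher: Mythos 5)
Your proposal is correct and follows the paper's proof in all essentials. The identity $\int_{-1/2}^{1/2} g f\,dx = \sum_{m\ \mathrm{odd}} b_m \widehat{f}(m/2)$ that you derive from Parseval on the odd-cosine basis is exactly what the paper obtains from Plancherel plus two applications of the shifted Shannon--Whittaker formula (its coefficients $a_k$ equal your $b_{2k-1}/2$, the factor $2$ coming from its symmetrization over positive and negative $k$), and the key input --- the sign pattern $\operatorname{sign}(b_m)=\sin(\pi m/2)$, checked for $\alpha\in\{2,\dots,6\}$ from terminating integration-by-parts expressions --- is precisely the paper's hypergeometric sign lemma, established there the same way. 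The one place you genuinely depart is the evaluation of the constant: the paper computes $4\sum_{k\ge 1}(-1)^{k+1}a_k/(2k-1)$ by swapping sum and integral and evaluating $\sum_{k}\frac{(-1)^{k+1}}{2k-1}e^{-2\pi i(k-1/2)x}$ through $\arctan(e^{-i\pi x})$, whose real part on $(-1/2,1/2)$ is the constant $\pi/4$; you instead observe that $f\equiv 1$ saturates every sampled inequality simultaneously with $\max(\widehat{f})=1/\pi$, so the normalization $\sum_m 2|b_m|/m=\pi\alpha/(\alpha+1)$ falls out of the identity itself. These are the same computation in different clothing (the square-wave Fourier series \emph{is} the expansion of the constant function), but your version is cleaner, avoids the complex-logarithm manipulation, and makes the equality case transparent. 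The only incompleteness: you exhibit the explicit sign verification only for $\alpha=2$; to fully match the paper you should also record the terminating expressions for $\alpha=3,4,5,6$ (the paper lists all of them) and check their signs, which is routine.
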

This seems to be quite a curious statement -- it would be interesting to understand it better; we can verify it in some special cases, $\alpha \in \left\{2,3,4,5,6\right\}$, but it does seem like it should be a special instance of a more general principle.
It is quite conceivable that the Lemma holds for all integers $\alpha \geq 2$ or possibly even for all real numbers $\alpha \geq 2$. A necessary condition is given in the next section.

\subsection{A Sign Pattern in $_1F_2$?} At this point, it is natural to wonder about the restriction $\alpha \in \left\{2,3,4,5,6\right\}$. As far as we can tell,
any case $\alpha \in \mathbb{N}$ can be decided by a finite procedure that consists of analyzing the sign pattern of an explicit polynomial: this poses no difficulty for $\alpha \in \left\{2,3,4,5,6\right\}$ and it does seem like it could be easily done for individual larger values of $\alpha$ as well. However, we have not found a common
mechanism by which all of them can be established simultaneously (or, put differently, a reason \textit{why} they should have such a sign pattern). This seems to hinge on an interesting
sign pattern structure in a hypergeometric function.

\begin{proposition} Let $\alpha > 0$. We define, for integers $k \geq 1$, the sequence
$$ a_k = ~_1F_2\left(  \frac{1+\alpha}{2}; \frac{3}{2}, \frac{3 + \alpha}{2}; -\frac{\pi^2}{16} (2k-1)^2 \right).$$
\emph{If} $a_k \geq 0$ for odd values of $k$ and $a_k \leq 0$ for even values of $k$, then for all smooth, even functions $f:[-1/2, 1/2] \rightarrow \mathbb{R}$,
$$ \max(\widehat{f}) \geq \frac{\alpha+1}{\alpha \pi}\int_{-1/2}^{1/2} \left(1-|2x|^{\alpha}\right) f(x) dx.$$
Moreover, the characteristic function $\chi_{[-1/2,1/2]}(x)$ is a local minimizer among smooth functions supported in $[-1/2,1/2]$ for that value of $\alpha$ and $\beta=1$.
\end{proposition}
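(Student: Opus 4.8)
The plan is to reduce the local-minimizer assertion to the stated inequality, exactly as in the proof of Theorem 2, and then to prove the inequality itself by sampling $\widehat{f}$ at the half-integers and rewriting its right-hand side as a weighted sum of these samples, with weights whose signs are governed by the $a_k$.

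First, the reduction. Writing $u = \chi_{[-1/2,1/2]} + \varepsilon f$ with $f$ smooth, even, supported in $[-1/2,1/2]$, I would expand the functional $J(u) = \||\xi|\widehat{u}\|_{L^\infty}^{\alpha}\,\||x|^{\alpha}u\|_{L^1}\,\|u\|_{L^1}^{-(\alpha+1)}$ to first order in $\varepsilon$. Since $\xi\,\widehat{\chi}(\xi) = \pi^{-1}\sin(\pi\xi)$ attains $\pm\pi^{-1}$ precisely on $\mathbb{Z}+1/2$, with sign $(-1)^j$ at $\xi = j+\tfrac12$, the pointwise bounds $\||\xi|\widehat{u}\|_{L^\infty}\geq |(j+\tfrac12)\widehat{u}(j+\tfrac12)|$ give, after optimizing over $j$, the estimate $\||\xi|\widehat{u}\|_{L^\infty}\geq \pi^{-1} + \varepsilon\max(\widehat{f}) + o(\varepsilon)$. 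Combining this with the (smooth) first-order behaviour of $\||x|^{\alpha}u\|_{L^1}$ and $\|u\|_{L^1}$, one checks that $J(u)\geq J(\chi_{[-1/2,1/2]})$ for all small $\varepsilon$ of either sign (replacing $f$ by $-f$) is \emph{equivalent} to the stated inequality. This is the computation already performed for Theorem 2, so the ``moreover'' clause follows the moment the inequality is established.

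The heart of the matter is the inequality. Set $b_j = (j+\tfrac12)\widehat{f}(j+\tfrac12)$; unwinding the definition of $\max(\widehat{f})$ shows that $\max(\widehat{f}) = \sup_{j\geq 0}(-1)^j b_j$. For the right-hand side, put $g(x) = (1-|2x|^{\alpha})\chi_{[-1/2,1/2]}(x)$, so that $L(f) := \int(1-|2x|^{\alpha})f = \int_{\mathbb{R}}\widehat{f}\,\widehat{g}$ by Plancherel. Because $f$, and hence $\widehat{f}$, is band-limited to $[-1/2,1/2]$, the Shannon--Whittaker formula expands $\widehat{f}$ in the sinc basis at the nodes $\mathbb{Z}+1/2$; substituting this and invoking the reproducing property of the sinc kernel against the band-limited function $\widehat{g}$ collapses the integral to $L(f) = 2\sum_{j\geq 0}\widehat{g}(j+\tfrac12)\,\widehat{f}(j+\tfrac12)$. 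A single integration by parts rewrites $\widehat{g}(j+\tfrac12) = \int_0^1(1-t^{\alpha})\cos(\nu_j t)\,dt$, with $\nu_j = \tfrac{\pi}{2}(2j+1)$, as $\tfrac{\alpha}{\nu_j}\int_0^1 t^{\alpha-1}\sin(\nu_j t)\,dt$, and expanding the sine in its power series yields the closed form $\widehat{g}(j+\tfrac12) = \tfrac{\alpha}{\alpha+1}\,a_{j+1}$. Consequently $\tfrac{\alpha+1}{\alpha\pi}L(f) = \sum_{j\geq 0}\lambda_j\,(-1)^j b_j$ with $\lambda_j = \tfrac{2}{\pi}(-1)^j a_{j+1}/(j+\tfrac12)$.

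Finally, the assumed sign pattern is exactly what turns this into a convex combination. With $k = j+1$ the hypothesis $a_k\geq 0$ for odd $k$ and $a_k\leq 0$ for even $k$ reads $(-1)^j a_{j+1}\geq 0$, i.e.\ $\lambda_j\geq 0$. The normalization $\sum_j\lambda_j = 1$ I would fix by evaluating the whole representation on the constant $f\equiv 1$, for which both sides of the inequality reduce to the same explicit value $\pi^{-1}$ (this is precisely the equality case and forces $\sum_{j\geq0}(-1)^j a_{j+1}/(j+\tfrac12)=\pi/2$, independent of any convention-dependent constant). Granting $\lambda_j\geq 0$ and $\sum_j\lambda_j = 1$, the bound $(-1)^j b_j\leq \max(\widehat{f})$ for every $j$ gives $\tfrac{\alpha+1}{\alpha\pi}L(f) = \sum_j\lambda_j(-1)^j b_j \leq \max(\widehat{f})$, which is the claim. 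I expect the main obstacle to be the analytic bookkeeping in the sampling step: justifying the interchange of summation and integration in the Shannon--Whittaker substitution and the absolute convergence of the resulting series (controlling $b_j = O(1)$ and $\lambda_j = O(j^{-2})$), together with verifying the normalization identity. The conceptual payoff, namely that the assumed sign pattern in $_1F_2$ is nothing other than the positivity of the sampling weights $\lambda_j$, then renders the inequality immediate.
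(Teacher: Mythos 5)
Your proposal is correct and its skeleton is the same as the paper's: the first-order expansion of the functional reduces the ``moreover'' clause to the stated inequality (and for that direction only the lower bound $\|\xi\widehat{u}\|_{L^\infty}\geq \pi^{-1}+\varepsilon\max(\widehat{f})+o(\varepsilon)$, obtained by sampling at $\mathbb{Z}+\tfrac12$, is needed, which is what you use); the inequality itself is then proved by Plancherel plus the shifted Shannon--Whittaker expansion at the half-integers, with the sign hypothesis on the $_1F_2$ values being precisely the nonnegativity of the sampling weights. Your closed form $\widehat{g}\left(j+\tfrac12\right)=\tfrac{\alpha}{\alpha+1}\,a_{j+1}$ is exactly the content of the paper's Lemma in \S 4.4 (same integration by parts, same power-series identification), stated there only as a sign statement. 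The one place where you genuinely diverge is the normalization $\sum_j\lambda_j=1$. The paper proves the equivalent identity by interchanging summation and integration and evaluating the trigonometric series $\sum_{k\geq 1}\tfrac{(-1)^{k+1}}{2k-1}e^{-2\pi i\left(k-\frac12\right)x}=\arctan\left(e^{-i\pi x}\right)$, whose real part is constantly $\tfrac{\pi}{4}$ on $\left(-\tfrac12,\tfrac12\right)$. You instead get the normalization for free by testing the sampling representation against $f\equiv 1$, i.e.\ against $\chi_{[-1/2,1/2]}$ itself, which is the equality case. Your device is more economical and, if anything, easier to justify rigorously: it needs only the $L^2$ validity of the sampling expansion for $\chi$, and the resulting pairing converges absolutely because both sampled sequences $\left(\widehat{g}\left(k-\tfrac12\right)\right)_k$ and $\left(\widehat{\chi}\left(k-\tfrac12\right)\right)_k$ are square-summable, whereas the paper's route manipulates a conditionally convergent series. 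One small bookkeeping caveat: your estimate $\lambda_j=O(j^{-2})$ holds for $\alpha\geq 1$ but degrades to $O(j^{-1-\alpha})$ for $0<\alpha<1$; this is harmless, since once $\lambda_j\geq 0$ and $\sum_j\lambda_j=1$ are established, the absolute convergence you need is automatic from the boundedness of $b_j$.
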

For any $\alpha \in \mathbb{N}$, the hypergeometric function reduces to a trigonometric polynomial that is not terribly difficult to analyze. However, we have not found a uniform way of treating all parameters of $\alpha$. It is also conceivable that the result holds for all $\alpha \geq 2$. Numerically, it seems to fail for $\alpha < 2$ (though this becomes harder to check as $\alpha$ approaches 2). The hypergeometric function is given by
$$ ~_1F_2\left( 1 + \frac{\alpha}{2}; \frac{3}{2}, \frac{3 + \alpha}{2}; x\right) = \sum_{n=0}^{\infty} \frac{1 + \frac{\alpha}{2}}{1 + \frac{\alpha}{2} + n} \frac{1}{(\frac{3}{2})_n} \frac{x^n}{n!}.$$
It is hard to see sign patterns from this form. We also have the identity (e.g. \cite{cho})
$$ \int_{0}^{x} J_{\frac{1}{2}}(x) x^{\alpha - \frac{1}{2}} dx = c_{\alpha} \cdot x^{\alpha+1} ~ _1F_2\left(  \frac{\alpha + 1}{2}; \frac{3}{2}, \frac{3 + \alpha}{2}; -\frac{x^2}{4} \right)$$
where $J_{1/2}$ is the Bessel function of order $1/2$ and $c_{\alpha} > 0$ is a constant. This relates the problem to the oscillation behavior of a Bessel function. Askey \cite{askey} remarks that for $\alpha=1$, there is no sign change. Other identities exist: introducing a modified Bessel function
$$ \mathcal{J}_{\alpha}(x) =~_0F_1\left(\alpha+1; -\frac{x^2}{4} \right) = \Gamma(\alpha+1) \left( \frac{x}{2} \right)^{-\alpha} J_{\alpha}(x),$$
we have the following identity (from a more general result in Cho \& Yun \cite{cho})
\begin{align*}
~_1F_2\left(  \frac{1+ \alpha}{2}; \frac{3}{2}, \frac{3 + \alpha}{2}; -\frac{x^2}{4}\right) &= \mathcal{J}_{1/2}\left(\frac{x}{2}\right)^2 \\
&+\sum_{n=1}^{\infty} \frac{2n+1}{n+1} \frac{1}{(3/2)_n^2} \frac{((1-\alpha)/2)_n}{((3-\alpha)/2)_n} \left( \frac{x}{4} \right)^{2n} \mathcal{J}_{n + \frac12}^2 \left(\frac{x}{2} \right).
\end{align*}

 We also refer to Askey \cite{askey2}, Cho \& Yun \cite{cho}, Fields \& Ismail \cite{fields} and Gasper \cite{gasper}.
It seems that there are known criteria that can be used to prove that such expressions do not change sign. In contrast, we are interested in highly controlled sign changes.

\subsection{Open Problems.} There are many open problems, we only list a few.
\begin{enumerate}
\item Does the uncertainty principle admit an extremizer? Is it compactly supported? Is it possible to show that for some parameters  $n,\alpha, \beta$ that the maximizer $u$ has Fourier decay $|\widehat{u}(\xi)| \sim |\xi|^{-\beta}$? For small values of $\beta$, this would imply that a maximizer need not be continuously differentiable.
\item Is the extremizer given by the characteristic function when $\beta =1$ and $\alpha \geq 2$? Or maybe for integer $\alpha \geq 2$? Is it a global extremizer among functions $u:[-1/2, 1/2] \rightarrow \mathbb{R}$ that do not vanish in $[-1/2,1/2]$? 
\item Is it true that for any $\alpha \geq 2$ (or maybe $2 \leq \alpha \in \mathbb{N}$?), the sequence
$$ a_k = ~_1F_2\left(  \frac{1+\alpha}{2}; \frac{3}{2}, \frac{3 + \alpha}{2}; -\frac{\pi^2}{16} (2k-1)^2 \right)$$
alternates sign?
\item What can be said about the case $\beta \neq 1$? For $\| |\xi|^{\beta} \cdot \widehat{u}\|_{L^{\infty}}$ to be finite, we require $|\widehat{u}(\xi)| \lesssim (1+|\xi|^{\beta})^{-1}$ which guarantees improved regularity for larger $\beta$. How does the regularity of the extremizer depend on $\beta$?
\item What can be said about the extremizer when $n=1$ and we restrict $u$ to be supported on the half line $[-\infty, 0]$? This is relevant when one is unable to look into the future; for an example from economics, see \cite{vickrey}.
\item These questions are just as interesting in higher dimensions but it is less clear what one could expect an extremizer to look like. It is not clear whether the characteristic function of a disk plays a similar role -- its Fourier transform is connected to the Bessel function which already arose here as well in connection with $_1 F_2$: are there other sign identities attached to it or are these connections restricted to the one-dimensional case? Is the alternating sign pattern observed for $_1 F_2$ a special instance of a more general phenomenon in higher dimensions?
\end{enumerate}

\section{Proofs}

\subsection{Proof of Theorem 1} Uncertainty principles are often a consequence of some hidden form of compactness; our proof is in a similar spirit. We first show that the inequality is invariant under multiplication with scalars and dilation. This allows us to assume without loss of generality that
$$\|u\|_{L^1(\mathbb{R}^n)} = 1 \qquad \mbox{and} \qquad  \| |x|^{\alpha} \cdot u\|_{L^1(\mathbb{R}^n)}=1$$ 
and it remains to show that $\| |\xi|^{\beta} \widehat{u}\|_{L^{\infty}(\mathbb{R}^n)}$ is not too small. The inequality is only interesting when the quantity is finite. Then we can use $\| \widehat{u}\|_{L^{\infty}(\mathbb{R}^n)} \leq \|u\|_{L^1(\mathbb{R}^n)} = 1$ close to the origin and $|\widehat{u}(\xi)| \lesssim |\xi|^{-\beta}$ away from the origin to conclude that $\widehat{u} \in L^2(\mathbb{R}^n)$ and thus $u \in L^2(\mathbb{R}^n)$.
The normalization 
$$\| |x|^{\alpha} \cdot u\|_{L^1(\mathbb{R}^n)}= 1 = \|  u\|_{L^1(\mathbb{R}^n)}$$
 implies that a nontrivial amount of $L^1-$mass is distance at most $\sim_{\alpha} 1$ from the origin. This fact combined with the Cauchy-Schwarz inequality shows that $\|u\|_{L^2} \gtrsim_{\alpha} 1$. The condition $|\widehat{u}(\xi)| \lesssim |\xi|^{-\beta}$ implies that the $L^2-$mass cannot be located at arbitrarily high frequencies (depending on $\alpha, \beta$) since $|\xi|^{-2 \beta}$ is integrable when $\beta > n/2$. If some of the $L^2-$mass is in a bounded region around the origin, then $\| |\xi|^{\beta} \widehat{u}\|_{L^{\infty}}$ is not too small unless it is all concentrated around the origin which is not possible because $\| \widehat{u}\|_{L^{\infty}(\mathbb{R}^n)} \leq \|u\|_{L^1(\mathbb{R}^n)} = 1$ concluding the argument. 

\begin{proof}[Proof of Theorem 1] We first note the behavior of the inequality under rescaling by constants and dilations. If 
$$v(x) = c \cdot u(x/L) \quad \mbox{for some} \quad c,L > 0,$$
 then
\begin{align*}
\| |\xi|^{\beta} \cdot \widehat{v}(\xi)\|_{L^{\infty}}^{\alpha} &= \| |\xi|^{\beta} \cdot \left( c L^n \widehat{u}(L \xi)\right) \|_{L^{\infty}}^{\alpha} =
c^{\alpha} \| |\xi|^{\beta} L^{\beta}  L^{n-\beta} \widehat{u}(L \xi) \|_{L^{\infty}}^{\alpha}\\
&= c^{\alpha} L^{(n-\beta)\alpha} \| |L \xi|^{\beta} \widehat{u}(L \xi) \|_{L^{\infty}}^{\alpha} = c^{\alpha} L^{(n-\beta)\alpha} \| \xi \cdot \widehat{u}(\xi) \|_{L^{\infty}}^{\alpha}
\end{align*}
as well as
\begin{align*}
 \| |x|^{\alpha} \cdot v \|^{\beta}_{L^1(\mathbb{R}^n)} &= c^{\beta} \left( \int_{\mathbb{R}^n} \left||x|^{\alpha} u\left(\frac{x}{L}\right)\right| dx\right)^{\beta} \\
&= c^{\beta} \cdot L^{\alpha \beta} \left(  \int_{\mathbb{R}^n} \left| \left|\frac{x}{L}\right|^{\alpha} u\left(\frac{x}{L}\right)\right| dx\right)^{\beta} \\
&= c^{\beta} \cdot L^{\alpha \beta + n\beta} \cdot \| |x|^{\alpha} \cdot u\|^{\beta}_{L^1(\mathbb{R}^n)}
\end{align*}
and
\begin{align*}
 \|v\|_{L^1(\mathbb{R}^n)}^{\alpha + \beta} &= c^{\alpha + \beta} L^{n(\alpha + \beta)} \|u\|^{\alpha + \beta}_{L^1(\mathbb{R}^n)}.
 \end{align*}
Thus, the inequality is invariant under multiplication with scalars and dilation. 
We use these symmetries to assume without loss of generality that
$$\|u\|_{L^1(\mathbb{R}^n)} = 1 \qquad \mbox{and} \qquad  \| |x|^{\alpha} \cdot u\|_{L^1(\mathbb{R}^n)}=1.$$ 
These two identities combined imply with Markov's inequality that for any $y > 0$,
$$1 = \int_{\mathbb{R}^n}{|x|^{\alpha} |u(x)| dx} \geq  y^{\alpha} \int_{|x| \geq y}{ |u(x)| dx}$$
implying that there is some mass around the origin
$$ \int_{|x| \leq y}{|u(x)| dx} \geq 1- \frac{1}{y^{\alpha}}$$
and, in particular, for $Y=10^{1/\alpha}$, we have
$$ \int_{|x| \leq Y}{|u(x)| dx} \geq \frac{9}{10}.$$
We note that
$$ |\widehat{u}(\xi)| \leq \min\left\{ 1, \frac{\||\xi|^{\beta} \cdot \widehat{u}\|_{L^{\infty}(\mathbb{R}^n)}}{|\xi|^{\beta}} \right\},$$
where the first inequality follows from $\|\widehat{u}\|_{L^{\infty}(\mathbb{R}^n)} \leq \|u\|_{L^1(\mathbb{R}^n)}$ and the second one is merely the definition of the $L^{\infty}-$norm. As soon as $\beta > n/2$, this shows that
\begin{align*}
\int_{\mathbb{R}^n}{ | \widehat{u}(\xi)|^2 d\xi} &\lesssim 1 + \||\xi|^{\beta} \cdot \widehat{u}\|^2_{L^{\infty}(\mathbb{R}^n)} \int_{1}^{\infty} \frac{1}{|\xi|^{2 \beta}} |\xi|^{n-1} d\xi \\
&\lesssim 1 + \||\xi|^{\beta} \cdot \widehat{u}\|^2_{L^{\infty}(\mathbb{R}^n)}.
\end{align*}
In particular, if $\||\xi|^{\beta} \cdot \widehat{u}\|^2_{L^{\infty}(\mathbb{R}^n)}$ is finite (the only case of interest here), then $\widehat{u} \in L^2(\mathbb{R}^n)$ and thus $u \in L^2(\mathbb{R}^n)$.
Using H\"older's inequality, we get that, using $\omega_n$ to denote the volume of the unit ball in $\mathbb{R}^n$,
$$ \frac{9}{10} \leq \int_{|x| \leq Y}{|u(x)| dx} \leq  \omega_n^{1/2} |Y|^{n/2} \left(  \int_{|x| \leq Y}{u(x)^2 dx} \right)^{1/2}$$
and thus
 $$ \int_{\mathbb{R}^n}{ |\widehat{u}(\xi)|^2 d\xi } = \int_{\mathbb{R}^n}{u(x)^2 dx} \geq  \int_{|x| \leq Y}{u(x)^2 dx} \geq \frac{1}{\omega_n Y^{n}}.$$
Our goal is to show that $\| |\xi|^{\beta} \cdot \widehat{u}\|_{L^{\infty}(\mathbb{R}^n)}$ cannot be arbitrarily small. If $\| |\xi|^{\beta} \cdot \widehat{u}\|_{L^{\infty}(\mathbb{R}^n)} \geq 1$, then we have achieved the goal. We can therefore assume without loss of generality that $\| |\xi|^{\beta} \cdot \widehat{u}\|_{L^{\infty}(\mathbb{R}^n)} \leq 1$. Then
$$ |\widehat{u}(\xi)| \leq \min\left\{ 1, \frac{1}{|\xi|^{\beta}} \right\}$$
implies, for any $c_1 > 0$,
$$ \int_{|\xi| \geq c_1}{|\widehat{u}(\xi)|^2} \leq n\omega_n \int_{c_1}^{\infty}{\frac{|\xi|^{n-1}}{|\xi|^{2\beta}} d\xi} \leq  \frac{2n\omega_n}{2\beta -1}\frac{1}{c_1^{2\beta -1}}.$$
This can be made arbitrarily small by making $c_1$ sufficiently large. Using
\begin{align*}
 \int_{|\xi| \leq c_1}{ |\widehat{u}(\xi)|^2 d\xi } &=  \int_{\mathbb{R}^n}{ |\widehat{u}(\xi)|^2 d\xi } - \int_{|\xi| \geq c_1}{ |\widehat{u}(\xi)|^2 d\xi }\\
 &\geq \frac{1}{\omega_n Y^n} - \int_{|\xi| \geq c_1}{ |\widehat{u}(\xi)|^2 d\xi }
\end{align*}
we see that for some constant $c_1=c_1(Y,\beta,n)$ depending only on $Y$ (and thus only on $\alpha$), $\beta$ and $n$,
$$ \int_{|\xi| \leq c_1}{ |\widehat{u}(\xi)|^2 d\xi }   \geq \frac{1}{2\omega_n Y^n}.$$
Using $|\widehat{u}(\xi)| \leq \|u\|_{L^1(\mathbb{R}^n)}=1$, we deduce that 
$$\int_{|\xi| \leq c_1}{ |\widehat{u}(\xi)| d\xi } \geq \int_{|\xi| \leq c_1}{ |\widehat{u}(\xi)|^2 d\xi } \geq  \frac{1}{2\omega_n Y^n}$$
and that for suitable $0 < c_2 < c_1$ depending only on $Y$ and $n$,
$$ \int_{c_2 < |\xi| \leq c_1}{ |\widehat{u}(\xi)| d\xi } \geq \frac{1}{4\omega_n Y^n} > 0.$$
This, in turn, shows that
\begin{align*}
\frac{1}{4\omega_n Y^n} &\leq \int_{c_2 \leq |\xi| \leq c_1}{ |\widehat{u}(\xi)| d\xi } \leq \frac{1}{c_2^{\beta}} \int_{c_2 \leq |\xi| \leq c_1}{ |\xi|^{\beta} |\widehat{u}(\xi)| d\xi } \leq  \frac{1}{c_2^{\beta}} \| |\xi|^{\beta} \cdot \widehat{u}(\xi) \|_{L^{\infty}}.
 \end{align*}
Since all the arising constants depend only on $\alpha, \beta$ and $n$, the result follows. \end{proof}

\section{Proof of Theorem 2}
We first perform a local stability analysis to understand the type of statement we need to prove. We will then state a slight reformulation of the Shannon-Whittaker reconstruction formula and prove the desired stability result in the simplest possible case $\alpha = 2$. Indeed, in this case all the computations can be carried out in closed form. We will then give a proof of the general case which will mimic the proof of the $\alpha = 2$ case while bypassing the evaluation of one of the integrals.

\subsection{Local Stability Analysis.}
We first perform a local stability analysis of the uncertainty principle for $(n,\alpha,\beta) = (1,\alpha,1)$ around the function
$$ u(x) = \chi_{[-1/2, 1/2]}.$$
Since we are interested in sharp constant, we need to specify which normalization of the Fourier transform we use: it will be $$ \widehat{u}(\xi) = \int_{\mathbb{R}}u(x) e^{-2 \pi i \xi x} dx \qquad \mbox{leading to} \qquad \widehat{u}(\xi) = \frac{\sin{(\pi \xi)}}{\pi \xi}.$$
Let $f$ be an even function compactly supported in $[-1/2,1/2]$. We analyze the behavior of the inequality under replacing $u$ by $u + \varepsilon f$ as $\varepsilon \rightarrow 0$. We observe that $\xi \cdot \widehat{u}(\xi)$ assumes the extremal values $\pm \pi^{-1}$ and, more precisely,
$$\xi \cdot \widehat{u}(\xi) = \begin{cases} \pi^{-1} \qquad &\mbox{if}~\xi = 2n + \frac{1}{2} \\ -\pi^{-1} \qquad &\mbox{if}~\xi = 2n+\frac{3}{2}.\end{cases}$$
This allow us to determine that for any even, smooth function $f:[-1/2,1/2] \rightarrow \mathbb{R}$, as $\varepsilon \rightarrow 0$ and up to lower-order terms,
\begin{align*}
 \| \xi \cdot (\widehat{u}(\xi)  + \varepsilon \widehat{f}(\xi)) \|_{L^{\infty}} &= \frac{1}{\pi} + \varepsilon \max(\widehat{f}) + \mbox{l.o.t.},
 \end{align*}
 where $\max(\widehat{f})$ is an abbreviation for 
 $$ \max(\widehat{f})=  \max \left\{ \sup_{k \in \mathbb{N}}  \left(2k+\frac{1}{2}\right)\widehat{f}\left(2k +\frac{1}{2}\right),- \inf_{k \in \mathbb{N}} \left(2k + \frac{3}{2}\right)\widehat{f}\left(2k +\frac{3}{2}\right)   \right\}.
$$ 
The other two terms are easy to analyze since $f$ is smooth and thus
$$ \| |x|^{\alpha} ( u+ \varepsilon f) \|_{L^1} =\| |x|^{\alpha} \|_{L^1([-1/2,1/2])} + \varepsilon \int_{-1/2}^{1/2} |x|^{\alpha} f(x) dx + \mbox{l.o.t.} $$
and
$$ \| u+ \varepsilon f \|_{L^1}^{\alpha + 1} = 1 + (\alpha+1) \varepsilon \int_{-1/2}^{1/2} f(x) dx + \mbox{l.o.t.}$$
Moreover, the constant $c$ in the equation
$$  \| \xi \cdot \widehat{u}(\xi)  \|_{L^{\infty}}^{\alpha} \cdot \| |x|^{\alpha} \cdot u\|_{L^{\infty}} = c \|u\|_{L^1}^{\alpha + 1}$$
is easily computed to be
$$ c=  \pi^{-\alpha} \cdot \int_{-1/2}^{1/2}{ |x|^{\alpha} dx} = \frac{1}{(2\pi)^{\alpha}} \frac{1}{\alpha + 1}.$$
This shows that local stability at order $\varepsilon$ is equivalent to 
$$ \frac{1}{\alpha+1} \frac{1}{2^{\alpha}}\frac{\alpha \varepsilon}{\pi^{\alpha-1}}  \max(\widehat{f})  + \frac{1}{\pi^{\alpha}}  \varepsilon \int_{-1/2}^{1/2} |x|^{\alpha} f(x) dx \geq \frac{\varepsilon }{(2\pi)^{\alpha}} \int_{-1/2}^{1/2}{f(x) dx}.$$
This can be rewritten as 
$$ \max(\widehat{f}) \geq \frac{\alpha + 1}{\pi \alpha} \int_{-1/2}^{1/2}{(1-|2x|^{\alpha}) f(x) dx}.$$

\subsection{The Shannon-Whittaker Reconstruction/Interpolation Formula.} 
The Shannon-Whittaker reconstruction formula \cite{shannon, whittaker}, first formulated by Kotelnikov \cite{kotelnikov} (see L\"uke \cite{luke}), states that if $f$ is compactly supported in $[-1/2, 1/2]$, then
its Fourier transform is completely determined from its values at the integers and
$$ \widehat{f}(\xi) = \sum_{k \in \mathbb{Z}}{ \widehat{f}(k) \frac{\sin{(\pi(\xi-k))}}{\pi(\xi-k)}}.$$
Heuristically put, it states that a compactly supported function is determined by its Fourier coefficients
(adapted to the interval of corresponding length) which correspond to the values of the Fourier transform at equally
spaced points. We will use a shifted version
$$ \widehat{f}(\xi) = \sum_{k \in \mathbb{Z}}{ \widehat{f}\left( k - \frac12\right) \frac{\sin{\left(\pi(\xi-k+\frac12\right))}}{\pi\left(\xi-k+\frac12\right)}}.$$
\begin{proof}[Proof of the shifted version.] The representation follows quite easily from the symmetries of the Fourier transform. Let us consider the function
$$g(x) = e^{i \pi x} f(x).$$
Naturally, if $f$ is supported on $[-1/2,1/2]$, then so is $g$.
Then we have
$$ \widehat{g}(\xi) =  \sum_{k \in \mathbb{Z}}{ \widehat{g}(k) \frac{\sin{(\pi(\xi-k))}}{\pi(\xi-k)}}.$$
However, we also have
$$ \widehat{g}(\xi) = \widehat{f}\left(\xi-\frac{1}{2}\right).$$
Therefore
\begin{align*}
 \widehat{f}\left(\xi-\frac{1}{2}\right) =  \widehat{g}(\xi) =  \sum_{k \in \mathbb{Z}}{ \widehat{g}(k) \frac{\sin{(\pi(\xi-k))}}{\pi(\xi-k)}} =   \sum_{k \in \mathbb{Z}}{ \widehat{f}\left(k - \frac12\right) \frac{\sin{(\pi(\xi-k))}}{\pi(\xi-k)}}.
 \end{align*}
\end{proof}

\subsection{The case $\alpha = 2$.} The purpose of this section is to explain the argument in its simplest possible setting. The case is particularly interesting because all of the arising quantities can be computed in closed form allowing for a very explicit argument. 
We will show that for smooth, even $f:[-1/2,1/2] \rightarrow \mathbb{R}$
$$ \max(\widehat{f}) \geq \frac{3}{2\pi}\int_{-1/2}^{1/2} (1-4x^2) f(x) dx.$$
\begin{proof}
We use the Plancherel identity
$$ \int_{\mathbb{R}} f(x) g(x) dx = \int_{\mathbb{R}} \widehat{f}(\xi) \widehat{g}(\xi)d\xi$$
to write
 $$\int_{-1/2}^{1/2} (1-4x^2) f(x) dx =  \int_{\mathbb{R}}^{}  \frac{2 \sin{(\pi \xi)} - 2\pi \xi \cos{(\pi \xi)}}{\pi^3 \xi^3}\widehat{f}(\xi) d\xi.$$
We use the Shannon-Whittaker reconstruction formula to decompose
$$ \widehat{f}(\xi) = \sum_{k \in \mathbb{Z}} \widehat{f}\left(k-\frac{1}{2}\right) \frac{\sin{(\pi\left(\xi -k +\frac12\right))}}{\pi \left( \xi - k + \frac12\right)}$$
allowing us to write
 $$\int_{-1/2}^{1/2} (1-4x^2) f(x) dx  = \sum_{k \in \mathbb{Z}}{ a_k \widehat{f}\left(k-\frac12\right)},$$
 where
$$ a_k = \int_{\mathbb{R}}^{}  \frac{2 \sin{(\pi \xi)} - 2\pi \xi \cos{(\pi \xi)}}{\pi^3 \xi^3} \frac{\sin{\pi\left(\xi-k+\frac12\right)}}{\pi \left( \xi-k +\frac12\right)}d\xi.$$
We now evaluate $a_k$. Abbreviating $h(x) = \max\left\{1-4x^2,0\right\}$, we can also write
$$ a_k =  \int_{\mathbb{R}}^{}  \widehat{h}(\xi) \frac{\sin{\pi\left(\xi-k+\frac12\right)}}{\pi \left( \xi-k +\frac12\right)}d\xi.$$
We use the Shannon-Whittaker formula once more to express $\widehat{h}$ and obtain
\begin{align*}
a_k &= \int_{\mathbb{R}}^{}  \widehat{h}(\xi) \frac{\sin{\pi\left(\xi-k+\frac12\right)}}{\pi \left( \xi-k +\frac12\right)}d\xi\\
&= \int_{\mathbb{R}}^{}  \left( \sum_{m \in \mathbb{Z}}{ \widehat{h}\left( m - \frac12\right) \frac{\sin{\left(\pi(\xi-m+\frac12\right))}}{\pi\left(\xi-m+\frac12\right)}} \right) \frac{\sin{\pi\left(\xi-k+\frac12\right)}}{\pi \left( \xi-k +\frac12\right)}d\xi\\
&= \widehat{h}\left(k - \frac12\right) = \int_{-1/2}^{1/2} (1-4x^2) \cos{\left(2 \pi  \left( k - \frac12 \right) x\right)} dx \\
&= \frac{16}{\pi^3} \frac{(-1)^{k+1}}{(2k-1)^3}.
\end{align*}
This shows
$$\int_{-1/2}^{1/2} (1-4x^2) f(x) dx =  \frac{16}{\pi^3}\sum_{k\in \mathbb{Z}}  \frac{(-1)^{k+1}}{(2k-1)^3} \widehat{f}\left(k-\frac12\right).$$
We also note that, since $f$ is even and real-valued, $\widehat{f}(x) = \widehat{f}(-x)$, and thus
$$ \widehat{f}\left(k+1-\frac12\right) = \widehat{f}\left(-k-\frac12\right)$$
which allows us to group positive and negative integers into
\begin{align*}
\frac{16}{\pi^3}\sum_{k\in \mathbb{Z}}  \frac{(-1)^{k+1}}{(2k-1)^3} \widehat{f}\left(k-\frac12\right) &=
\frac{16}{\pi^3}\sum_{k=1}^{\infty} \widehat{f}\left(k-\frac12\right) \left(  \frac{(-1)^{k+1}}{(2k-1)^3}  + \frac{(-1)^{(-k+1)+1}}{(2(-k+1)-1)^3} \right)
\\ 
&=\frac{32}{\pi^3}\sum_{k=1}^{\infty} \widehat{f}\left(k-\frac12\right)   \frac{(-1)^{k+1} }{(2k-1)^3}  
\end{align*}
Let us now fix the variable $\max(\widehat{f})$ via
 $$ \max(\widehat{f})=  \max \left\{ \sup_{k\in \mathbb{N}}  \left(2k+\frac{1}{2}\right)\widehat{f}\left(2k +\frac{1}{2}\right), -\inf_{k \in \mathbb{N}}  \left(2k + \frac{3}{2}\right)\widehat{f}\left(2k +\frac{3}{2}\right)   \right\}.
$$ 
This means that for all $k\in \mathbb{N}$
$$ \widehat{f}\left(2k +\frac{1}{2}\right) \leq \frac{ \max(\widehat{f})}{2k+\frac{1}{2}} \quad \mbox{and} \quad  \widehat{f}\left(2k +\frac{3}{2}\right) \geq - \frac{ \max(\widehat{f})}{2k + \frac32}.$$
We can now maximize the sum by estimating
\begin{align*}
\frac{32}{\pi^3}\sum_{k=1}^{\infty} \widehat{f}\left(k-\frac12\right)   \frac{(-1)^{k+1} }{(2k-1)^3}   
&= \frac{32}{\pi^3}\sum_{k=1 \atop k~{\tiny \mbox{odd}}}^{\infty} \widehat{f}\left(k-\frac12\right)   \frac{1 }{(2k-1)^3}  \\
&-\frac{32}{\pi^3}\sum_{k=1 \atop k~{\tiny \mbox{even}}}^{\infty} \widehat{f}\left(k-\frac12\right)   \frac{1 }{(2k-1)^3}  \\
&\leq \frac{32}{\pi^3}\sum_{k=1 \atop k~{\tiny \mbox{odd}}}^{\infty} \frac{ \max(\widehat{f})}{k-\frac12} \frac{1 }{(2k-1)^3} \\
&+\frac{32}{\pi^3}\sum_{k=1 \atop k~{\tiny \mbox{even}}}^{\infty} \frac{ \max(\widehat{f})}{k-\frac12}   \frac{1 }{(2k-1)^3} \\
&= \frac{32}{\pi^3}\sum_{k=1 \atop k}^{\infty} \frac{ \max(\widehat{f})}{k-\frac12} \frac{1 }{(2k-1)^3} 
\end{align*}
This sum can be combined into one sum resulting in
$$ \frac{32}{\pi^3}\sum_{k=1}^{\infty} \widehat{f}\left(k-\frac12\right)   \frac{(-1)^{k+1} }{(2k-1)^3}    \leq 
\frac{64 \max(\widehat{f})}{\pi^3}\sum_{k=1}^{\infty}  \frac{1}{(2k-1)^4}.$$
We have the generalized zeta function identity
$$ \sum_{k=1}^{\infty}  \frac{1}{(2k-1)^4} = \frac{\pi^4}{96}$$
and thus
$$\int_{-1/2}^{1/2} (1-4x^2) f(x) dx \leq \frac{3 \pi}{2}  \max(\widehat{f}) \qquad \mbox{as desired.}$$
\end{proof}

\subsection{The general case.} The general case requires an additional ingredient: an oscillating sign pattern in the hypergeometric function $_1 F_2$.
\begin{lem} Let $k \in \mathbb{N}$ and consider the integral
$$ \int_{-1/2}^{1/2} (1-|2x|^{\alpha}) e^{-2 \pi i \left( k - \frac12 \right) x} dx.$$
This integral has the same sign as
$$ a_k=~_1F_2\left(  \frac{1+\alpha}{2}; \frac{3}{2}, \frac{3 + \alpha}{2}; -\frac{\pi^2}{16} (2k-1)^2 \right).$$
If $\alpha \in \left\{2,3,4,5,6\right\}$, then $a_k$ is positive for odd $k$ and negative for even $k$.
\end{lem}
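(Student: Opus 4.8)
The plan is to reduce the integral to the elementary form $\int_0^a t^{\alpha-1}\sin t\,dt$, which simultaneously settles the sign-equivalence with $a_k$ and sets up the sign-pattern analysis. Write $\xi = k-\tfrac12 = \tfrac{2k-1}{2}$ and $a = \pi\xi$. Since $1-|2x|^{\alpha}$ is even, only the cosine survives and
$$ I_k := \int_{-1/2}^{1/2}(1-|2x|^{\alpha})e^{-2\pi i(k-1/2)x}\,dx = \int_{-1/2}^{1/2}\cos(2\pi\xi x)\,dx - \int_0^1 y^{\alpha}\cos(ay)\,dy, $$
the last step using the substitution $y=2x$ and evenness. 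The first term equals $\tfrac{\sin a}{a}$, and integrating the second by parts once gives $\int_0^1 y^{\alpha}\cos(ay)\,dy = \tfrac{\sin a}{a} - \tfrac{\alpha}{a^{\alpha+1}}\int_0^a t^{\alpha-1}\sin t\,dt$. The two $\tfrac{\sin a}{a}$ contributions cancel, leaving the clean identity
$$ I_k = \frac{\alpha}{a^{\alpha+1}}\int_0^a t^{\alpha-1}\sin t\,dt, \qquad a = \frac{\pi(2k-1)}{2}. $$

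For the sign-equivalence I would invoke the $J_{1/2}$ identity quoted in the introduction: since $J_{1/2}(t) = \sqrt{2/(\pi t)}\,\sin t$, that identity says $\int_0^a t^{\alpha-1}\sin t\,dt = c_\alpha'\, a^{\alpha+1}\, {}_1F_2(\tfrac{\alpha+1}{2};\tfrac32,\tfrac{\alpha+3}{2};-\tfrac{a^2}{4})$ with $c_\alpha'>0$. (Equivalently, one expands $\sin t$ in its power series and integrates term by term, using $(2n+1)! = (\tfrac32)_n 4^n n!$ and $\tfrac{1}{\alpha+2n+1} = \tfrac{1}{\alpha+1}\tfrac{((\alpha+1)/2)_n}{((\alpha+3)/2)_n}$, which yields $c_\alpha' = \tfrac{1}{\alpha+1}$.) Because the argument is $-\tfrac{a^2}{4} = -\tfrac{\pi^2(2k-1)^2}{16}$, this is exactly the $a_k$ in the statement, so $I_k = \tfrac{\alpha}{\alpha+1}a_k$ and, the prefactor being positive, $I_k$ and $a_k$ share the same sign.

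For the sign pattern it then suffices to determine the sign of $\int_0^{a_k} t^{\alpha-1}\sin t\,dt$. For $\alpha\in\mathbb{N}$, repeated integration by parts ($\alpha-1$ times) produces a closed form $P(a)\sin a + Q(a)\cos a$ plus a boundary constant coming from the lower limit $t=0$. At $a=a_k = \tfrac{\pi(2k-1)}{2}$ one has $\cos a_k = 0$ and $\sin a_k = (-1)^{k+1}$, so the entire $Q(a)\cos a$ part drops out and the integral equals $(-1)^{k+1}P(a_k) - c_\alpha$, where the constant $c_\alpha$ is nonzero precisely when $\alpha$ is odd (it arises from the surviving even-power term at $t=0$). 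Here $P$ is explicit with positive leading coefficient: for instance $P(a)=3a^2-6$ when $\alpha=4$ and $P(a)=5a^4-60a^2+120$ when $\alpha=6$.

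The remaining and genuinely delicate step is to check that $(-1)^{k+1}P(a_k)-c_\alpha$ carries the sign $(-1)^{k+1}$ at every $k\ge 1$. For even $\alpha$ (no constant) this is just positivity of $P$ at the discrete points $a_k$, and since the leading term dominates for large $a_k$ only finitely many small $k$ require checking; for odd $\alpha$ the constant must be carried through and the cases $k$ odd and $k$ even treated separately. I expect the main obstacle to be the tightest point $a_1 = \pi/2$: for example $\alpha=6$ demands $5(\pi/2)^4 - 60(\pi/2)^2 + 120 > 0$, which holds but only by a hair (about $2.4$). This borderline positivity is the ``lucky confluence'' the paper refers to, and it is exactly why a single uniform argument covering all $\alpha$ is elusive and the statement is restricted to $\alpha\in\{2,3,4,5,6\}$.
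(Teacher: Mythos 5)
Your proposal is correct and follows essentially the same route as the paper: evenness reduction, integration by parts to $\frac{\alpha}{a^{\alpha+1}}\int_0^a t^{\alpha-1}\sin t\,dt$ with $a=\pi(2k-1)/2$, identification of that integral with ${}_1F_2$ giving $I_k=\frac{\alpha}{\alpha+1}a_k$ (which the paper merely asserts, while you verify via the term-by-term power series), and then case-by-case closed-form sign verification for $\alpha\in\{2,\dots,6\}$, including the genuinely tight check at $a_1=\pi/2$ for $\alpha=6$. An incidental benefit of your antiderivative structure $(-1)^{k+1}P(a_k)-c_\alpha$: it reveals that the paper's displayed formula for $\alpha=5$ carries an overall sign typo (at $k=1$ it evaluates to a negative number, whereas $\int_0^{\pi/2}t^4\sin t\,dt>0$), while your version $(-1)^{k+1}(4a^3-24a)+24$ produces the correct sign pattern.
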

\begin{proof} Since $1-|2x|^{\alpha}$ is even, it is easy to see that the imaginary part vanishes. It remains to understand the sign of the integral
$$  \int_{-1/2}^{1/2} (1-|2x|^{\alpha}) \cos{\left(2 \pi  \left( k - \frac12 \right) x\right)} dx.$$
Integration by parts leads to the integral
$$ I = \frac{2^{\alpha + 1} \alpha}{2\pi (k-1/2)} \int_0^{1/2}  x^{\alpha-1} \sin{\left(2 \pi  \left( k - \frac12 \right) x\right)}.$$
We conclude the first step of the argument by noting that the term in front of the integral is positive and that the integral
evaluates to 
$$I = \frac{\alpha}{\alpha+1}~_1F_2\left(  \frac{1+\alpha}{2}; \frac{3}{2}, \frac{3 + \alpha}{2}; -\frac{\pi^2}{16} (2k-1)^2 \right).$$
We now consider the special cases, If $\alpha = 2$, then
$$ \int_0^{1/2}  x^{\alpha-1} \sin{\left(2 \pi  \left( k - \frac12 \right) x\right)} = \frac{ (-1)^{k+1}}{ \pi^2  (2k - 1)^2}.$$ 
If $\alpha = 3$, then
$$ \int_0^{1/2}  x^{\alpha-1} \sin{\left(2 \pi  \left( k - \frac12 \right) x\right)} = \frac{\pi (-1)^{k+1} (2k-1) -2}{(2k-1)^3 \pi^3} .$$
If $\alpha = 4$, then
$$ \int_0^{1/2}  x^{\alpha-1} \sin{\left(2 \pi  \left( k - \frac12 \right) x\right)}  = \frac{3}{4} \frac{(-1)^{k+1} (\pi^2 (2k-1)^2 - 8)}{(2k -1)^4 \pi^4}.$$
If $\alpha = 5$, then
$$ \int_0^{1/2}  x^{\alpha-1} \sin{\left(2 \pi  \left( k - \frac12 \right) x\right)}  = \frac{(-1)^k (2k-1) \pi ((2k-1)^2 \pi^2 - 24) -48}{2 (2k-1)^5 \pi^5}.$$
If $\alpha = 6$, then
$$ \int_0^{1/2}  x^{\alpha-1} \sin{\left(2 \pi  \left( k - \frac12 \right) x\right)}  = \frac{5}{16}\frac{(-1)^{k+1}   ( 384 - 48(1-2k)^2 \pi^2 + (1-2k)^4 \pi^4)}{ \pi^6 (2k-1)^6}.$$
For all these explicit expressions, the claim is easily verified.
\end{proof}
\begin{proof}[Proof of Theorem 2] We now assume that $\alpha > 0$ is a real number for which
$$ a_k = \int_{-1/2}^{1/2} (1-|2x|^{\alpha}) \cos{\left(2 \pi  \left( k - \frac12 \right) x\right)} dx.$$
satisfies $a_{2k+1} \geq 0$ and $a_{2k+2} \leq 0$ for all $k \geq 0$.
Under these assumptions, we will show that
$$ \max(\widehat{f}) \geq \frac{\alpha+1}{\alpha \pi}\int_{-1/2}^{1/2} (1-|2x|^{\alpha}) f(x) dx$$
which, by the reasoning in \S 4.1, is equivalent to local stability of the minimizer.
Interpreting the variable 
$$ a_k = \int_{-1/2}^{1/2} (1-|2x|^{\alpha}) e^{-2 \pi i \left( k - \frac12 \right) x} dx $$
as the Fourier transform of $h(x) = \max\left\{0, 1 - |2x|^{\alpha}\right\}$ evaluated at $\mathbb{Z} + 1/2$, we use the Shannon-Whittaker reconstruction formula to write the integral
$$ I =  \int_{-1/2}^{1/2} (1-|2x|^{\alpha}) f(x) dx $$
as
\begin{align*}
I &= \int_{\mathbb{R}} \widehat{h}(\xi) \widehat{f}(\xi) d\xi \\
 &= \int_{\mathbb{R}}  \left( \sum_{k \in \mathbb{Z}}{ a_k\frac{\sin{\left(\pi(\xi-k+\frac12\right))}}{\pi\left(\xi-k+\frac12\right)}} \right)  \left( \sum_{k \in \mathbb{Z}}{ \widehat{f}\left( k - \frac12\right) \frac{\sin{\left(\pi(\xi-k+\frac12\right))}}{\pi\left(\xi-k+\frac12\right)}} \right) d\xi.
\end{align*}
Orthogonality leads to cancellation of off-diagonal terms and we obtain
$$   \int_{-1/2}^{1/2} (1-|2x|^{\alpha}) f(x) dx  = \sum_{k \in \mathbb{Z}} a_k \widehat{f}\left(k - \frac{1}{2}\right).$$
As above, we note that $a_k = a_{-k+1}$ and, since $f:[-1/2,1/2] \rightarrow \mathbb{R}$ is even,
$$ \widehat{f}\left(k+1-\frac12\right) = \widehat{f}\left(-k-\frac12\right)$$
allowing us to write
$$ \int_{-1/2}^{1/2} (1-|2x|^{\alpha}) f(x) dx = 2\sum_{k =1}^{\infty} a_k \widehat{f}\left( k - \frac12 \right).$$
 Introducing the variable $\max(\widehat{f})$ via
 $$ \max(\widehat{f})=  \max \left\{ \sup_{k \in \mathbb{N}}  \left(2k+\frac{1}{2}\right)\widehat{f}\left(2k +\frac{1}{2}\right), -\inf_{k \in \mathbb{N}}  \left(2k + \frac{3}{2}\right)\widehat{f}\left(2k +\frac{3}{2}\right)   \right\},
$$ 
we have for all $k\in \mathbb{N}$
$$ \widehat{f}\left(2k +\frac{1}{2}\right) \leq \frac{ \max(\widehat{f})}{2k+\frac{1}{2}} \quad \mbox{and} \quad  \widehat{f}\left(2k +\frac{3}{2}\right) \geq - \frac{ \max(\widehat{f})}{2k + \frac32}.$$
Moreover, by assumption, we have $a_k > 0$ for odd $k$ and $a_k < 0$ for even $k$. This allows us to write
\begin{align*}
 \int_{-1/2}^{1/2} (1-|2x|^{\alpha}) f(x) dx &\leq \max(\widehat{f}) 4\sum_{k=1}^{\infty}{ \frac{|a_k|}{2k-1}} \\
 &= \max(\widehat{f})  4\sum_{k=1}^{\infty}{ \frac{a_k (-1)^{k+1}}{2k-1}}.
 \end{align*}
It remains to understand this infinite sum. Recalling that $a_k$ are defined as Fourier coefficients of the function $h(x) = \max\left\{0, 1-|2x|^{\alpha} \right\}$, we can write 
\begin{align*}
 4 \sum_{k=1}^{\infty}{ \frac{a_k (-1)^{k+1}}{2k-1}} &=  4 \sum_{k=1}^{\infty}{ \frac{ (-1)^{k-1}}{2k+1}} \int_{-1/2}^{1/2} (1-|2x|^{\alpha}) e^{-2 \pi i \left( k - \frac12 \right) x} dx\\
 &=4  \int_{-1/2}^{1/2}   (1-|2x|^{\alpha})  \sum_{k=1}^{\infty}{ \frac{ (-1)^{k+1}}{2k-1}}e^{-2 \pi i \left( k - \frac12 \right) x} dx.
\end{align*}
This infinite sum can be evaluated. Note that
\begin{align*}
\sum_{k=1}^{\infty}{ \frac{ (-1)^{k+1}}{2k-1}}e^{-2 \pi i \left( k - \frac12 \right) x}  &= e^{i \pi x}\sum_{k=1}^{\infty}{ \frac{ (-1)^{k+1}}{2k-1}}e^{-2 \pi i k x}  \\
&= \arctan{(e^{-i \pi x})}\\
&= \frac{i}{2}  \log{\left(\frac{i+e^{-i \pi x}}{i-e^{-i \pi x}}\right)}
\end{align*}
We have, for $-1/2 < x < 1/2$ that
$$\arctan{(e^{-i \pi x})} = \frac{\pi}{4} + \mbox{odd and purely imaginary function}.$$
which simplifies the sum to
\begin{align*}
  4\sum_{k=1}^{\infty}{ \frac{a_k (-1)^{k+1}}{2k+1}}  &= \pi \int_{-1/2}^{1/2} (1-|2x|^{\alpha})dx  \\
  &=  \pi  \left(1 - \frac{1}{\alpha + 1}\right) = \frac{\alpha \pi}{\alpha + 1}.
  \end{align*}
Altogether, we have seen that
 \begin{align*}
  \int_{-1/2}^{1/2} (1-|2x|^{\alpha}) f(x) dx &\leq 4\max(\widehat{f})  \sum_{k=1}^{\infty}{ \frac{a_k (-1)^{k+1}}{2k-1}} =  \max(\widehat{f}) \frac{\alpha \pi}{\alpha + 1}
 \end{align*}
 which is the desired result.
\end{proof}

\end{document}